\definecolor{darkgreen}{rgb}{0.0, 0.63, 0.0}
\theoremstyle{definition}
\theoremstyle{lemma}
\newtheorem{theorem}{Theorem}
\begin{document}
    \title{Strongly Reduced Lattice Bases}
    \author{Christian Porter\footnote{Imperial College London, United Kingdom. Corresponding author, c.porter17@imperial.ac.uk}}
    \maketitle
\begin{abstract}
    In this paper, we show that for each lattice basis, there exists an equivalent basis which we describe as ``strongly reduced''. We show that bases reduced in this manner exhibit rather ``short'' basis vectors, that is, the length of the $i$th basis vector of a strongly reduced basis is upper bounded by a polynomial factor in $i$ multiplied by the $i$th successive minima of the lattice. The polynomial factor seems to be smaller than other known factors in literature, such as HKZ and Minkowski reduced bases. Finally, we show that such bases also exhibit relatively small orthogonality defects.
\end{abstract}
\section{Introduction}
A lattice $\Lambda$ is a discrete subgroup of $\mathbb{R}^m$ for some positive integer $m$. A lattice can be represented by the $\mathbb{Z}$-span of a basis $B=\{\mathbf{b}_1,\dots,\mathbf{b}_n\}$ for some $n \leq m$ (we say that $\Lambda$ is full-rank if $n=m$), and is said to be of rank $n$ if the vectors composing $B$ are linearly independent over $\mathbb{R}$. It is known that multiplying $B$ by any element of $GL_n(\mathbb{Z})$ (if $n=m$) results in another basis $B^\prime$ for $\Lambda$, and so there exists infinitely many basis representations of $\Lambda$ when $n>1$. We define the $i$th successive minimum of $\Lambda$ by
\begin{align*}
    \lambda_i=\min_{r \in \mathbb{R}^+}\{r: \mathcal{B}_n(0,r) \hspace{1.5mm} \text{contains at least $i$ linearly independent vectors of $\Lambda$}\},
\end{align*}
where $\mathcal{B}_n(0,r)$ is the $n$-dimensional ball of radius $r$ centred at the origin of $\mathbb{R}^m$.

The ``quality'' of a lattice basis is generally determined by how ``close'' the basis vectors are to the successive minima of the lattice - a ``bad'' basis will have basis vectors whose lengths are much longer than the successive minima of the lattice, whilst a ``good'' basis contains vectors whose vector lengths are relatively close to the successive minima. The study of constructing ``good'' bases for lattices is known as \emph{reduction theory}. Examples of reduced bases include those in the sense of Minkowski \cite{geometriederzahlen}, Korkin-Zolotarev \cite{surlesformesquadratiques}, Lenstra-Lenstra-Lovasz \cite{LLL} and Venkov \cite{venkov}.

Let $B=\{\mathbf{b}_1,\dots,\mathbf{b}_n\}$ be a basis for a lattice $\Lambda$. We define by
\begin{align*}
    &\mathbf{b}_i(j)=\mathbf{b}_i-\sum_{k=1}^{j-1}\mu_{i,k}\mathbf{b}_k(k), \hspace{2mm} \forall 1\leq j \leq i \leq n,
    \\&\mathbf{b}_1(1)=\mathbf{b}_1,
    \\& \mu_{i,j}=\frac{\langle \mathbf{b}_i,\mathbf{b}_j(j)\rangle}{\|\mathbf{b}_j(j)\|^2}, \hspace{2mm} \forall 1 \leq j <i \leq n.
\end{align*}
Then for any lattice vector $\mathbf{v}=\sum_{i=1}^n x_i \mathbf{b}_i$, $x_i \in \mathbb{Z}$, it holds that
\begin{align*}
    \|\mathbf{v}\|^2=\sum_{i=1}^n\left(x_i+\sum_{j=i+1}^nx_j\mu_{i,j}\right)^2\|\mathbf{b}_i(i)\|^2,
\end{align*}
where here $\|\cdot\|$ denotes the regular Euclidean norm and $\sum_{i=k}^l=0$ if $k>l$.

Another measure of the quality of a basis is by its \emph{orthogonality defect}, which is defined by
\begin{align*}
    \Delta(B)\triangleq \prod_{i=1}^n\frac{\|\mathbf{b}_i\|^2}{\|\mathbf{b}_i(i)\|^2},
\end{align*}
which roughly measures how close to being completely orthogonal a lattice basis is.
\subsection{Main Results}
In this paper, we define a new method of reduction which we call \emph{strong reduction}. We ascertain two main results about strongly reduced bases.
\begin{theorem}\label{successiveminima}
    Let $B=\{\mathbf{b}_1,\dots,\mathbf{b}_n\}$ be a strongly reduced basis of a lattice $\Lambda$ with successive minima $\lambda_1,\dots,\lambda_n$. Then for all $1 \leq i \leq n$, we have $\|\mathbf{b}_i\|^2 \leq \max\left\{1,\frac{i-k_i}{4}+\frac{k_i}{16}\right\}\lambda_i^2$, where $k_i$ is the number of elements $j$ in the set $\{1,\dots,i-1\}$ such that $\|\mathbf{b}_j\|>\lambda_j$.
\end{theorem}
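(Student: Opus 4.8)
The plan is to work directly from the Gram--Schmidt data attached to the basis. From the defining relations one has $\mathbf{b}_i=\mathbf{b}_i(i)+\sum_{j=1}^{i-1}\mu_{i,j}\mathbf{b}_j(j)$, and since the vectors $\mathbf{b}_1(1),\dots,\mathbf{b}_i(i)$ are pairwise orthogonal this yields the identity $\|\mathbf{b}_i\|^2=\|\mathbf{b}_i(i)\|^2+\sum_{j=1}^{i-1}\mu_{i,j}^2\|\mathbf{b}_j(j)\|^2$. The whole theorem then reduces to bounding the individual summands, and the two exceptional constants $\tfrac14$ and $\tfrac1{16}$ should arise by combining the size-reduction inequality $\mu_{i,j}^2\le\tfrac14$ (which I expect to be part of the definition of strong reduction) with appropriate upper bounds on the Gram--Schmidt lengths $\|\mathbf{b}_j(j)\|^2$ in terms of the successive minima.

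The key is a good/bad dichotomy on the indices. First I would record the trivial bound $\|\mathbf{b}_j(j)\|^2\le\|\mathbf{b}_j\|^2$ valid for every $j$; hence whenever $\|\mathbf{b}_j\|\le\lambda_j$ (a ``good'' index) we get $\|\mathbf{b}_j(j)\|^2\le\lambda_j^2\le\lambda_i^2$ for $j\le i$, using that the successive minima are non-decreasing. The essential input is the complementary estimate for the ``bad'' indices: I claim that strong reduction forces $\|\mathbf{b}_j(j)\|^2\le\tfrac14\lambda_j^2$ \emph{precisely} when $\|\mathbf{b}_j\|>\lambda_j$. Granting this single estimate, everything else is bookkeeping, and I split into two cases according to whether $\mathbf{b}_i$ itself is good or bad.

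If $\|\mathbf{b}_i\|\le\lambda_i$ there is nothing to prove, since $\|\mathbf{b}_i\|^2\le\lambda_i^2\le\max\{1,\tfrac{i-k_i}{4}+\tfrac{k_i}{16}\}\lambda_i^2$. If instead $\|\mathbf{b}_i\|>\lambda_i$, I apply the key estimate to the leading term to get $\|\mathbf{b}_i(i)\|^2\le\tfrac14\lambda_i^2$, and then bound the $i-1$ cross terms one at a time: a good index $j<i$ contributes $\mu_{i,j}^2\|\mathbf{b}_j(j)\|^2\le\tfrac14\lambda_i^2$, while a bad index $j<i$ contributes $\mu_{i,j}^2\|\mathbf{b}_j(j)\|^2\le\tfrac14\cdot\tfrac14\lambda_i^2=\tfrac1{16}\lambda_i^2$. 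Summing the leading $\tfrac14\lambda_i^2$ together with the $i-1-k_i$ good contributions and the $k_i$ bad contributions gives exactly $\big(\tfrac{i-k_i}{4}+\tfrac{k_i}{16}\big)\lambda_i^2$, which is dominated by the claimed maximum.

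The orthogonal decomposition and the splitting of the two sums are routine; the real work is the key estimate that a bad index satisfies $\|\mathbf{b}_j(j)\|^2\le\tfrac14\lambda_j^2$. I expect to extract this from the definition of strong reduction by a projection/minimality argument: exhibiting a lattice vector of length at most $\lambda_j$ whose projection orthogonal to $\mathbf{b}_1,\dots,\mathbf{b}_{j-1}$ controls $\mathbf{b}_j(j)$, so that the failure of the inequality $\|\mathbf{b}_j\|\le\lambda_j$ can only be ``paid for'' by a correspondingly small Gram--Schmidt component. The hard part will be verifying that strong reduction yields the sharp constant $\tfrac14$ here, rather than merely some constant, since it is exactly this step that couples the final bound to the precise defining inequalities of a strongly reduced basis.
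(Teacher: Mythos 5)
There are two genuine gaps here, and both are precisely the points where the paper's proof does something you don't. First, your opening step assumes that size-reduction, $\mu_{i,j}^2\le\tfrac14$, is ``part of the definition of strong reduction.'' It is not: the second defining property only says that $\mathbf{b}_i$ is a shortest vector in the coset $\mathbf{b}_i+\mathbb{Z}\mathbf{b}_1+\dots+\mathbb{Z}\mathbf{b}_{i-1}$, and coset-minimality does \emph{not} imply $|\mu_{i,j}|\le\tfrac12$ for intermediate indices $1<j<i$, because adding $x_j\mathbf{b}_j$ to $\mathbf{b}_i$ perturbs all lower Gram--Schmidt coefficients as well. (For instance, with $\|\mathbf{b}_1(1)\|=\|\mathbf{b}_2(2)\|=1$, $\mu_{2,1}=\tfrac12$, $\mu_{3,1}=0$, $\mu_{3,2}=0.6$, the vector $\mathbf{b}_3$ is minimal in its coset yet not size-reduced.) So your identity $\|\mathbf{b}_i\|^2=\|\mathbf{b}_i(i)\|^2+\sum_{j<i}\mu_{i,j}^2\|\mathbf{b}_j(j)\|^2$ cannot be combined with $\mu_{i,j}^2\le\tfrac14$. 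The paper's fix is to run your bookkeeping not on $\mathbf{b}_i$ itself but on an auxiliary vector $\mathbf{w}_i=\mathbf{b}_i+\sum_{j<i}x_j\mathbf{b}_j$ whose integer coefficients are chosen greedily (from $j=i-1$ down to $j=1$) so that every effective Gram--Schmidt coefficient of $\mathbf{w}_i$ has absolute value at most $\tfrac12$; the second property then gives $\|\mathbf{b}_i\|\le\|\mathbf{w}_i\|$, and your counting applied to $\|\mathbf{w}_i\|^2$ goes through and yields $\big(\tfrac{i-k_i}{4}+\tfrac{k_i}{16}\big)\lambda_i^2$.

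Second, the estimate you correctly isolate as the crux --- $\|\mathbf{b}_j(j)\|^2\le\tfrac14\lambda_j^2$ whenever $\|\mathbf{b}_j\|>\lambda_j$ --- is left unproven, and the sketch you give (``exhibit a lattice vector of length at most $\lambda_j$ whose projection controls $\mathbf{b}_j(j)$'') cannot work without the \emph{first} defining property, which your proposal never invokes. An arbitrary lattice vector achieving $\lambda_j$ may have components along $\mathbf{b}_{j+1},\dots,\mathbf{b}_n$, in which case its projection orthogonal to $\mathbf{b}_1,\dots,\mathbf{b}_{j-1}$ says nothing about $\mathbf{b}_j(j)$ alone. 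The paper's argument needs both properties together: by property 1 the minimizer $\mathbf{v}_j=\sum_{l=1}^{j}x_l^{(j)}\mathbf{b}_l$ has support in $\{1,\dots,j\}$ with $x_j^{(j)}\neq0$; if $|x_j^{(j)}|=1$ then $\pm\mathbf{v}_j$ lies in the coset of $\mathbf{b}_j$, so property 2 forces $\|\mathbf{b}_j\|\le\lambda_j$, contradicting badness; hence $|x_j^{(j)}|\ge 2$ and
\begin{align*}
\lambda_j^2=\|\mathbf{v}_j\|^2\ge \left(x_j^{(j)}\right)^2\|\mathbf{b}_j(j)\|^2\ge 4\|\mathbf{b}_j(j)\|^2.
\end{align*}
With these two repairs, your case split and final count (one leading term at $\tfrac14$, $i-1-k_i$ good terms at $\tfrac14$, $k_i$ bad terms at $\tfrac1{16}$) coincide exactly with the paper's.
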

\begin{theorem}\label{orthogonalitydefect}
    If $B$ is strongly reduced, then
    \begin{align*}
        \Delta(B) \leq \gamma_n^n\max_{\alpha_n}\prod_{j=\alpha_n}^n\left(\frac{n-\alpha_n+1}{4}+\frac{i}{16}\right),
    \end{align*}
    where $\alpha_n \in \{\lfloor \beta_n\rfloor, \lceil \beta_n \rceil\}$, $\gamma_n \leq \frac{n}{8}+\frac{6}{5}$ denotes the Hermite constant of rank $n$ and $\beta_n$ is the positive real root of the polynomial $4\sigma(4\sigma+1)(4\sigma+2)(4\sigma+3)- 16(n+3\sigma+1)(n+3\sigma+2)(n+3\sigma+3)=0$, where $\sigma=n-k$.
\end{theorem}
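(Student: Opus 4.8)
The plan is to turn the orthogonality defect into a ratio of two products and bound each separately. Since the Gram--Schmidt vectors satisfy $\prod_{i=1}^n\|\mathbf{b}_i(i)\|^2 = \det(\Lambda)^2$, the definition of $\Delta(B)$ gives $\Delta(B) = \det(\Lambda)^{-2}\prod_{i=1}^n\|\mathbf{b}_i\|^2$, which I would rewrite as
\begin{align*}
\Delta(B) = \left(\prod_{i=1}^n\frac{\|\mathbf{b}_i\|^2}{\lambda_i^2}\right)\cdot\frac{\prod_{i=1}^n\lambda_i^2}{\det(\Lambda)^2}.
\end{align*}
For the second factor I would invoke the bound $\prod_{i=1}^n\lambda_i^2 \le \gamma_n^n\det(\Lambda)^2$, a standard consequence of Minkowski's second theorem in terms of the Hermite constant. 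For the first factor, set $S = \{\,i : \|\mathbf{b}_i\| > \lambda_i\,\}$, so that $k_i = |S\cap\{1,\dots,i-1\}|$; every index outside $S$ contributes a ratio at most $1$, while Theorem~\ref{successiveminima} bounds each remaining ratio (the maximum there equalling the polynomial expression precisely when $i\in S$). Hence
\begin{align*}
\prod_{i=1}^n\frac{\|\mathbf{b}_i\|^2}{\lambda_i^2} \le \prod_{i\in S}\left(\frac{i-k_i}{4}+\frac{k_i}{16}\right),
\end{align*}
and the task reduces to maximising the right-hand side over all admissible sets $S$.

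The combinatorial core is to locate the worst-case pattern $S$. Writing a single factor as $\frac{i-k_i}{4}+\frac{k_i}{16} = \frac{i}{4}-\frac{3k_i}{16}$ shows it is increasing in the position $i$ and decreasing in the count $k_i$. I would exploit this through an exchange argument: if $p\in S$ but $p+1\notin S$ with $p+1\le n$, then replacing $p$ by $p+1$ leaves the counts $k_i$ of all other members of $S$ unchanged and strictly increases the factor at the moved slot, so the product does not decrease. Iterating pushes $S$ to the right, so for fixed $|S| = m$ the maximum is attained at $S = \{n-m+1,\dots,n\}$, where $k_{n-m+t} = t-1$ and the product telescopes to
\begin{align*}
P(m) = \prod_{t=1}^{m}\left(\frac{n-m+1}{4}+\frac{t-1}{16}\right).
\end{align*}
After re-indexing, this is the product displayed in the theorem, and the truncation against $1$ is inactive whenever the base $\frac{n-m+1}{4}$ is at least $1$.

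It remains to maximise $P(m)$ over $m\in\{0,1,\dots,n\}$. Because varying $m$ rescales the base of every factor, I would analyse the ratio $P(m+1)/P(m)$ rather than $P$ itself. Expressing the factors through rising factorials, this ratio collapses to a quotient of four consecutive integers over three consecutive integers; setting it equal to $1$ and substituting $\sigma = n-m$ produces exactly
\begin{align*}
4\sigma(4\sigma+1)(4\sigma+2)(4\sigma+3) = 16(n+3\sigma+1)(n+3\sigma+2)(n+3\sigma+3),
\end{align*}
whose positive root is $\beta_n$. The ratio is monotone in $m$, so $P$ is unimodal and its discrete maximiser is an integer adjacent to the continuous optimum; this forces $\alpha_n\in\{\lfloor\beta_n\rfloor,\lceil\beta_n\rceil\}$. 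Taking the larger of the two evaluations and inserting the explicit estimate $\gamma_n\le\frac{n}{8}+\frac{6}{5}$ then yields the stated bound.

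The main obstacle is the optimisation in the last two paragraphs. Making the exchange argument fully rigorous requires care when the base drops below $1$ for small $n$, so that the truncation against $1$ becomes active and the clean telescoping fails; and the ratio computation yielding the quartic-versus-cubic identity must be carried out exactly to match the polynomial defining $\beta_n$. Finally, one must justify that the discrete maximiser is genuinely one of $\lfloor\beta_n\rfloor,\lceil\beta_n\rceil$ rather than merely near $\beta_n$, which is precisely where unimodality of $P(m)$ is essential. By comparison, the reduction in the first paragraph and the appeal to Minkowski's second theorem are routine.
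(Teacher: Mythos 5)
Your proposal is correct and takes essentially the same route as the paper's proof: combine Theorem \ref{successiveminima} with Minkowski's second theorem to reduce everything to maximising the combinatorial product, argue that the maximum occurs when the indices with $\|\mathbf{b}_i\|>\lambda_i$ are the last $k$, and then study the ratio $f(n,k+1)/f(n,k)$, which collapses to the same quartic-over-cubic expression whose positive root is $\beta_n$. The only notable differences are presentational: your exchange argument spells out rigorously the step the paper dismisses as ``easily seen,'' and your bookkeeping (a ratio of at most $1$ for each index outside $S$, rather than the paper's somewhat loose identification of $\prod_{i=1}^n\max\{1,\cdot\}$ with a product over only the bad indices) is actually the cleaner way to arrive at the same bound.
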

\section{Strongly Reduced Lattice Bases}
Let $\Lambda$ be a lattice with a basis $B=\{\mathbf{b}_1,\dots,\mathbf{b}_n\}$. We will assume without loss of generality that $\Lambda$ is full-rank. We say that $B$ is \emph{strongly reduced} if it satisfies the following:
\begin{enumerate}
    \item For all $1 \leq i \leq n$, let $\mathbf{v}_i=\sum_{j=1}^n x_j^{(i)}\mathbf{b}_j$ for some $x_j^{(i)} \in \mathbb{Z}$ such that $\mathbf{v}_1,\dots,\mathbf{v}_n$ are linearly independent and $\|\mathbf{v}_i\|=\lambda_i$. Then $x_i^{(i)} \neq 0$ and $x_{i+1}^{(i)}=x_{i+2}^{(i)}=\dots=x_n^{(i)}=0$.
    \item For all $1 \leq i \leq n$, $\|\mathbf{b}_i\| \leq \left\|\mathbf{b}_i +\sum_{j=1}^{i-1}x_j\mathbf{b}_j\right\|$ for all $x_j \in \mathbb{Z}$.
\end{enumerate}
\begin{theorem}
For each lattice $\Lambda$, there exists a basis $B$ for $\Lambda$ that is strongly reduced.
\end{theorem}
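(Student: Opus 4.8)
The plan is to build the basis in two stages: first arrange that fixed successive-minima vectors sit ``triangularly'' with respect to the basis (condition~1), and then length-reduce each basis vector to obtain condition~2, checking that the second stage does not disturb the first.

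First I would fix linearly independent lattice vectors $\mathbf{v}_1,\dots,\mathbf{v}_n \in \Lambda$ with $\|\mathbf{v}_i\| = \lambda_i$ for every $i$; these exist by the standard greedy choice, taking $\mathbf{v}_i$ to be a shortest lattice vector linearly independent from $\mathbf{v}_1,\dots,\mathbf{v}_{i-1}$. Set $V_i = \mathrm{span}_{\mathbb{R}}(\mathbf{v}_1,\dots,\mathbf{v}_i)$ and define the primitive (saturated) sublattices $\Lambda_i = \Lambda \cap V_i$, with $\Lambda_0 = \{0\}$. These form a flag $\{0\} = \Lambda_0 \subset \Lambda_1 \subset \dots \subset \Lambda_n = \Lambda$ with $\mathrm{rank}(\Lambda_i) = i$, and $\mathbf{v}_i \in \Lambda_i$ while $\mathbf{v}_i \notin V_{i-1}$.

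Next I would construct a basis \emph{adapted to this flag}. Since each $\Lambda_{i-1}$ is saturated in $\Lambda_i$, the quotient $\Lambda_i / \Lambda_{i-1}$ is a finitely generated torsion-free abelian group, hence free of rank one, so I can inductively pick $\mathbf{b}_i \in \Lambda_i$ whose image generates $\Lambda_i/\Lambda_{i-1}$; then $\mathbf{b}_1,\dots,\mathbf{b}_i$ is a basis of $\Lambda_i$ for every $i$. For such a basis, condition~1 holds automatically with the vectors $\mathbf{v}_i$ already fixed: writing $\mathbf{v}_i = \sum_j x_j^{(i)} \mathbf{b}_j$, the membership $\mathbf{v}_i \in \Lambda_i = \mathrm{span}_{\mathbb{Z}}(\mathbf{b}_1,\dots,\mathbf{b}_i)$ forces $x_{i+1}^{(i)} = \dots = x_n^{(i)} = 0$, while $\mathbf{v}_i \notin V_{i-1}$, together with $\Lambda_{i-1} = \mathrm{span}_{\mathbb{Z}}(\mathbf{b}_1,\dots,\mathbf{b}_{i-1}) \subset V_{i-1}$, forces $x_i^{(i)} \neq 0$.

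Finally, for each $i$ I would replace $\mathbf{b}_i$ by a length-minimal representative of its coset $\mathbf{b}_i + \Lambda_{i-1}$; such a representative exists because the coset is a discrete set whose norms are bounded below, so the minimum is attained on a suitable compact ball. This gives condition~2, since condition~2 at index $i$ asserts exactly that $\mathbf{b}_i$ is shortest in $\mathbf{b}_i + \Lambda_{i-1}$. The key point---and the step I expect to require the most care---is that this reduction does not destroy condition~1. Each replacement has the form $\mathbf{b}_i \mapsto \mathbf{b}_i + w$ with $w \in \Lambda_{i-1} = \mathrm{span}_{\mathbb{Z}}(\mathbf{b}_1,\dots,\mathbf{b}_{i-1})$, a unimodular operation that leaves every sublattice $\Lambda_k = \mathrm{span}_{\mathbb{Z}}(\mathbf{b}_1,\dots,\mathbf{b}_k)$ unchanged; hence the basis remains flag-adapted and condition~1 persists with the same $\mathbf{v}_i$. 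Moreover the reductions at distinct indices are independent, since condition~2 at index $i$ depends only on $\mathbf{b}_i$ and on the set $\Lambda_{i-1}$, both preserved by reductions elsewhere. Performing all $n$ reductions therefore yields a basis satisfying both conditions, as required.
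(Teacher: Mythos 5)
Your proof is correct, and it reaches the paper's two conditions by the same two-stage skeleton the paper uses --- first build a basis satisfying condition~1 with respect to a fixed set of minima vectors $\mathbf{v}_1,\dots,\mathbf{v}_n$, then coset-reduce each $\mathbf{b}_i$ modulo its predecessors to get condition~2, checking that these unimodular replacements leave condition~1 intact. Where you genuinely diverge is in how the condition-1 basis is produced. The paper starts from an \emph{arbitrary} basis and massages it: it reindexes so that $x_i^{(i)}\neq 0$, then applies a gcd-based unimodular transformation turning $\sum_{j\geq i}x_j^{(i)}\mathbf{b}_j$ into a multiple of a primitive vector of the tail sublattice, iterating over $i$. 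You instead construct the basis intrinsically, from the flag of saturated sublattices $\Lambda_i=\Lambda\cap\mathrm{span}_{\mathbb{R}}(\mathbf{v}_1,\dots,\mathbf{v}_i)$, using that each quotient $\Lambda_i/\Lambda_{i-1}$ is torsion-free of rank one and hence free. Your route buys rigour: the paper's ``we can freely reindex'' step actually needs a nonzero-determinant (permutation-expansion) argument, and its ``suitable transform'' silently invokes the lemma that a primitive vector extends to a basis --- both of which your saturation argument sidesteps entirely, while also giving for free that $\mathbf{b}_1,\dots,\mathbf{b}_k$ is a basis of $\Lambda\cap V_k$ for every $k$, which is exactly what makes the preservation argument in your last stage clean. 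What the paper's route buys in exchange is a procedure phrased as explicit unimodular operations on a given input basis, which is closer in spirit to a reduction algorithm, the theme of the rest of the paper. One small remark: both you and the paper prove condition~1 for \emph{one particular} choice of the $\mathbf{v}_i$, which is evidently the intended reading of the definition; your fixed-flag formulation makes that reading explicit rather than leaving it implicit.
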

\begin{proof}
We will prove constructively that given any basis $B=\{\mathbf{b}_1,\dots,\mathbf{b}_n\}$ for $\Lambda$ then we move to a new basis $B$ that is strongly reduced using only unimodular operations. First, we construct the $\mathbf{v}_i=\sum_{i=1}^nx_j^{(i)}\mathbf{b}_j$ as in the first definition of strongly reduced bases, that is, so each $\mathbf{v}_i$ is linearly independent over $\mathbb{R}$ and $\|\mathbf{v}_i\|=\lambda_i$. Then for each $1 \leq i \leq n$, there exists a $1 \leq j \leq n$ such that $x_j^{(i)} \neq 0$ since by construction these vectors are linearly independent. Therefore we can freely reindex the basis vectors such that $x_i^{(i)} \neq 0$ for each $1 \leq i \leq n$. Suppose then that $x_j^{(i)} \neq 0$ for some $1 \leq i < j \leq n$, and suppose that this is the smallest such $i$. Then we can construct a new basis $B^\prime=\{\mathbf{b}_1^\prime,\dots,\mathbf{b}_n^\prime\}$ such that $\mathbf{b}_1^\prime=\mathbf{b}_1,\dots,\mathbf{b}_{i-1}^\prime=\mathbf{b}_{i-1}$ and $\mathbf{b}_i^\prime=\sum_{j=i}^ny_j^{(i)}\mathbf{b}_j$ where $y_j^{(i)}=\frac{x_j^{(i)}}{\gcd(x_i^{(i)},\dots,x_n^{(i)})}\in \mathbb{Z}$, by performing a suitable transform of the vectors $\mathbf{b}_{i+1},\dots,\mathbf{b}_n$. So, we can assume that the new basis $B^\prime$ satisfies the first property of strongly reduced basis. It follows easily that we can then replace any $\mathbf{b}_i^\prime$ with $\mathbf{b}_i^\dag=\mathbf{b}_i^\prime+\sum_{j=1}^{i-1}z_i \mathbf{b}_j^\prime$ for some $z_j \in \mathbb{Z}$ such that $\|\mathbf{b}_i^\dag\|\leq \left\|\mathbf{b}_i^\dag +\sum_{j=1}^{i-1}x_j\mathbf{b}_j^\dag\right\|$ for any $x_j \in \mathbb{Z}$ without violating the first property of strongly reduced bases. In this way, we can construct the strongly reduced basis.
\end{proof}
We now want to prove some useful properties about strongly reduced lattice bases.
\begin{proof}[Proof of Theorem \ref{successiveminima}.]
    First, construct $\mathbf{v}_i=\sum_{j=1}^n x_j^{(i)}\mathbf{b}_j$ as before for some $x_j^{(i)} \in \mathbb{Z}$ so that $\mathbf{v}_1,\dots,\mathbf{v}_n$ are linearly independent over $\mathbb{R}$ and $\|\mathbf{v}_i\|=\lambda_i$ for all $1 \leq i \leq n$. Since $B$ is strongly reduced, by the first property, for each $i$ we have $x_{i+1}^{(i)}=\dots=x_n^{(i)}=0$. By the second property of strongly reduced bases, if $|x_i^{(i)}|=1$ we have $\|\mathbf{b}_i\|=\lambda_i$, which clearly would satisfy the inequality stated in the theorem. Therefore, assume instead that $\|\mathbf{b}_i\|>\lambda_i$, so $|x_i^{(i)}| \geq 2$. Then
    \begin{align}
\lambda_i^2=\|\mathbf{v}_i\|^2=\sum_{j=1}^i\left(x_j^{(i)}+\sum_{k=j+1}^ix_k^{(i)}\mu_{j,k}\right)^2\|\mathbf{b}_j(j)\|^2 \geq {x_i^{(i)}}^2\|\mathbf{b}_i(i)\|^2 \geq 4\|\mathbf{b}_i(i)\|^2 \iff \|\mathbf{b}_i(i)\|^2 \leq \frac{1}{4}\lambda_i^2. \label{1}
    \end{align}
    Now define $\mathbf{w}_i=\mathbf{b}_i+\sum_{j=1}^{i-1}x_j\mathbf{b}_j$ such that 
    \begin{align*}
    &|\mu_{i,i-1}+x_{i-1}|\leq 1/2,
    \\&|\mu_{i,i-2}+x_{i-1}\mu_{i-1,i-2}+x_{i-2}| \leq 1/2,
    \\
    &\vdots
    \\& \left|x_1+\mu_{i,1}+\sum_{j=2}^{i-1}x_j\mu_{j,1}\right|\leq 1/2.
\end{align*}
By the second property of strongly reduced bases, $\|\mathbf{w}_i\|\geq \|\mathbf{b}_i\|$. Let $S_i$ denote the subset of $\{1,\dots,i-1\}$ such that $\|\mathbf{b}_j\|=\lambda_j$ for all $j \in S_i$ and $T_i$ denote the complement of this set. By definition of $k_i$, $|T_i|=k_i$. Then for any $j \in T_i$, we have $\|\mathbf{b}_j(j)\|^2 \leq \frac{1}{4}\lambda_j^2$ by a similar argument to \ref{1}. Therefore using the fact that $\lambda_j^2\leq \lambda_k^2$ for all $j\leq k$,
\begin{align*}
\|\mathbf{b}_i\|^2 \leq \|\mathbf{w}_i\|^2 &\leq \|\mathbf{b}_i(i)\|^2+\sum_{j=1}^{i-1}\frac{1}{4}\|\mathbf{b}_j(j)\|^2=\|\mathbf{b}_i(i)\|^2+\sum_{j \in S_i}\frac{1}{4}\|\mathbf{b}_j(j)\|^2+\sum_{j \in T_i}\frac{1}{4}\|\mathbf{b}_j(j)\|^2 \\&\leq \frac{1}{4}\lambda_i^2+\frac{1}{16}\sum_{j \in T_i}\lambda_j^2+\frac{1}{4}\sum_{j \in S_i}\lambda_j^2 \leq \left(\frac{i-k_i}{4}+\frac{k_i}{16}\right)\lambda_i^2,
\end{align*}
as required.
\end{proof}
It is significant to remark that if $B$ is strongly reduced, this means that it is guaranteed that $\|\mathbf{b}_i\|=\lambda_i$ for $i \in \{1,2,3,4\}$.

\begin{proof}[Proof of Theorem \ref{orthogonalitydefect}.]
    By Theorem \ref{successiveminima}, since $B$ is strongly reduced we have
    \begin{align*}
        \|\mathbf{b}_i\|^2 \leq \max\left\{1,\frac{i-k_i}{4}+\frac{k_i}{16}\right\}\lambda_i^2,
    \end{align*}
    where $k_i$ is defined as before. Therefore
    \begin{align*}
        \Delta(B) \leq \prod_{i=1}^n\max\left\{1,\frac{i-k_i}{4}+\frac{k_i}{16}\right\}\frac{\lambda_i^2}{\|\mathbf{b}_i(i)\|^2} \leq \gamma_n^n \prod_{i=1}^n\max\left\{1,\frac{i-k_i}{4}+\frac{k_i}{16}\right\},
    \end{align*}
    where we have used Minkowski's second theorem which states that $\prod_{i=1}^n \frac{\lambda_i^2}{\|\mathbf{b}_i(i)\|^2} \leq \gamma_n^n$. We are therefore interested in when the product
    \begin{align*}
        \prod_{i=1}^n\max\left\{1,\frac{i-k_i}{4}+\frac{k_i}{16}\right\}=\prod_{i=4}^n\left(\frac{i-k_i}{4}+\frac{k_i}{16}\right)
    \end{align*}
    is maximised (since $k_i \leq i-4$, since $\|\mathbf{b}_i\|=\lambda_i$ for $i \leq 4$). Suppose that $k$ vectors in $B$ satisfy $\|\mathbf{b}_i\|>\lambda_i$. It is easily seen that the product above is maximised when we have $\|\mathbf{b}_n\|>\lambda_n,\dots,\|\mathbf{b}_{n-k+1}\|>\lambda_{n-k+1}$, so we consider the product
    \begin{align*}
        f(n,k)=\prod_{i=0}^{k-1}\left(\frac{n-k+1}{4}+\frac{i}{16}\right).
    \end{align*}
    We want to know where the maximum of $f(n,k)$ occurs. Consider then
    \begin{align*}
        &\frac{f(n,k+1)}{f(n,k)}=\frac{\prod_{i=0}^k\left(\frac{n-k}{4}+\frac{i}{16}\right)}{\prod_{i=0}^{k-1}\left(\frac{n-k+1}{4}+\frac{i}{16}\right)}\\&=\frac{(4n-4k)(4n-4k+1)\dots(4n-4k+4)(4n-4k+5)\dots(4n-3k-1)(4n-3k)}{16(4n-4k+4)(4n-4k+5)\dots(4n-3k+1)(4n-3k+2)(4n-3k+3)}
        \\&=\frac{(4n-4k)(4n-4k+1)(4n-4k+2)(4n-4k+3)}{16(4n-3k+1)(4n-3k+2)(4n-3k+3)}.
    \end{align*}
    If the above expression is greater than $1$, then the product is increasing for increasing $k$, and conversely, decreasing in $k$ if the expression is less than $1$. Hence, if it is increasing then using the change in variable $\sigma=n-k$,
    \begin{align*}
        4\sigma(4\sigma+1)(4\sigma+2)(4\sigma+3) \geq 16(n+3\sigma+1)(n+3\sigma+2)(n+3\sigma+3),
    \end{align*}
    and otherwise if it is decreasing,
    \begin{align*}
        4\sigma(4\sigma+1)(4\sigma+2)(4\sigma+3) \leq 16(n+3\sigma+1)(n+3\sigma+2)(n+3\sigma+3).
    \end{align*}
    Since the expression on both sides of the inequalities has negative roots and are positive quartic/cubic polynomials, it holds that if $\frac{f(n,k+1)}{f(n,k)} \geq 1$ for a value of $k$ then $\frac{f(n,k^\prime+1)}{f(n,k^\prime)} \geq 1$ for all $k^\prime \leq k$ and likewise if $\frac{f(n,k+1)}{f(n,k)} \leq 1$ for some value of $k$ then $\frac{f(n,k^\prime+1)}{f(n,k^\prime)} \leq 1$ for all $k^\prime \geq k$. Hence, we want to find the integer $k$ for which the product is maximised, which will sit on either side of the maximal root of
    \begin{align*}
        y=4\sigma(4\sigma+1)(4\sigma+2)(4\sigma+3)- 16(n+3\sigma+1)(n+3\sigma+2)(n+3\sigma+3).
    \end{align*}
    This is a quartic polynomial in $\sigma$ which has a positive and negative root, but we are only interested in the positive root, which is $\beta_n$ exactly.
\end{proof}
We will finish by comparing this bound with the upper bound on the orthogonality defect of HKZ reduced lattices, which is given by
\begin{align*}
    \Delta(B) \leq \gamma_n^n\prod_{i=1}^n\frac{i+3}{4}.
\end{align*}
We will define by $f_H(n)=\prod_{i=1}^n\frac{i+3}{4}$ and $f_S(n)=\max_{\alpha_n}\prod_{j=\alpha_n}^n\left(\frac{n-\alpha_n+1}{4}+\frac{i}{16}\right)$, the respective upper bounds of the orthogonality defect of HKZ reduced bases \cite{HKZ} and strongly reduced bases, divided by $\gamma_n^n$. It is well known by Minkowski's second theorem that there always exists a lattice with a basis whose orthogonality defect achieves $\gamma_n^n$ exactly, and this orthogonality defect is minimal, so this is a good measure of the quality of the orthogonality defect of such a basis. We refer the reader to table 1 to compare the bounds between the two bases.
\begin{table}[]\label{table}
\centering
\begin{tabular}{|l|l|l|}
\hline
$n$  & $f_H(n)$               & $f_S(n)$               \\ \hline
$4$  & $3.281$                & $1$                    \\ \hline
$5$  & $6.563$                & $1.25$                 \\ \hline
$6$  & $14.766$               & $1.641$                \\ \hline
$7$  & $36.914$               & $2.344$                \\ \hline
$8$  & $101.514$              & $3.809$                \\ \hline
$9$  & $304.541$              & $6.427$                \\ \hline
$10$ & $989.758$              & $11.523$               \\ \hline
$15$ & $993779.193$           & $577.568$              \\ \hline
$20$ & $3.919 \times 10^9$    & $88919.111$            \\ \hline
$30$ & $1.255 \times 10^{18}$ & $2.786 \times 10^{10}$ \\ \hline
\end{tabular}
\caption{Bounds comparing $f_H(n)$ to $f_S(n)$ for each rank $n$.}
\end{table}
\section{Concluding Remarks}
In this work, we developed a new method of reduction of lattice bases that promises basis vectors with relatively short length, and whose orthogonality defect is also quite small. These bounds are compared to the bounds on other notions of strong reduction such as HKZ reduced lattices. It remains an open problem to develop a method of reducing lattice bases in this manner, however, as intuitively strongly reducing bases would require prior knowledge about the successive minima of the lattice, and so an algorithm to strongly reduce a lattice basis would require a lot of computation in order to enumerate all the successive minima of the lattice.

\end{document}